\def\.{\cdot}
\def\la{\langle}
\def\ra{\rangle}
\def\beq{\begin{equation}}
\def\eeq{\end{equation}}
\def\bea{\begin{eqnarray*}}
\def\eea{\end{eqnarray*}}
\def\beaa{\begin{eqnarray}}
\def\eeaa{\end{eqnarray}}
\def\ba{\begin{array}}
\def\ea{\end{array}}
\def\bp{\begin{proof}}
\def\r{\end{proof}}
\def \RM{\mathbb{R}}
\def \HM{\mathbb{H}}
\def \SM{\mathbb{S}}
\def\id{\mathrm{id}}
\def\be{\begin{equation}}
\def\ee{\end{equation}}
\def\tr{\mathrm{tr}}
\def\so{\mathfrak{so}}
\def\su{\mathfrak{su}}
\def\spin{\mathfrak{spin}}
\def\m{\mathfrak{m}}
\def\SU{\mathrm{SU}}
\def\Spin{\mathrm{Spin}}
\def\ad{\mathrm{ad}}
\def\scal{\mathrm{scal}}
\def\vol{\mathrm{vol}}
\newtheorem{epr}{Proposition}[section]
\newtheorem{ath}[epr]{Theorem}
\newtheorem{elem}[epr]{Lemma}
\newtheorem{ecor}[epr]{Corollary}
\theoremstyle{definition}
\newtheorem{ere}[epr]{Remark}
\newtheorem{exe}[epr]{Example}
\author{Andrei Moroianu and Uwe Semmelmann}
\address{Andrei Moroianu \\
Universit{\'e} de Versailles-St Quentin  \\
UMR 8100 du CNRS, 78035 Versailles Cedex\\
45 avenue des {\'E}tats-Unis}
\email{andrei.moroianu@math.cnrs.fr     }
\address{Uwe Semmelmann\\
Institut f\"ur Geometrie und Topologie \\
Fachbereich Mathematik\\
Universit{\"a}t Stuttgart\\
Pfaffenwaldring 57 \\
70569 Stuttgart, Germany
}
\email{uwe.semmelmann@mathematik.uni-stuttgart.de}
\date{\today}
\thanks{This work was initiated during a ``Research in Pairs'' stay at the Mathematisches Forschungsinstitut, Oberwolfach, Germany. We warmly thank the MFO for hospitality. The first author was partially supported by the contract ANR-10-BLAN 0105 ``Aspects Conformes de la G{\'e}om{\'e}trie''. }
\begin{document}

\begin{abstract} 
We study generalized Killing spinors on the standard sphere $\SM^3$, which turn out to be related to Lagrangian embeddings in the nearly K\"ahler manifold $S^3\times S^3$ and to great circle flows on $\SM^3$. Using our methods we generalize a well known result of Gluck and Gu \cite{gg01} concerning divergence-free geodesic vector fields on the sphere and we show that the space of Lagrangian submanifolds of $S^3\times S^3$ has at least three connected components.
\medskip

\noindent
2010 {\it Mathematics Subject Classification}: 
53C27, 53C42, 53C25, 53D12, 53D25. 

\end{abstract}

\title{Generalized Killing spinors and Lagrangian graphs}

\maketitle

\section{Introduction}
In this article we investigate  {\it generalized Killing spinors} \cite{bgm} (cf. also \cite{kf01}) on the standard sphere $\SM^3$. Recall that generalized Killing spinors on some spin manifold $(M,g)$ are spinors on $M$ verifying the equation 
\beq\label{def}
 \;  \nabla_X\Psi = A(X)\cdot \Psi   \qquad \forall X\in \mathrm{T}M
\eeq
for some endomorphism  $A$ symmetric with respect to $g$. Real Killing spinors (for $A=\lambda\id$, $\lambda\in\RM$) or parallel spinors (for $A=0$) are particular examples of such objects.
Generalized Killing spinors arise as restrictions of parallel spinors to hypersurfaces, and the converse is true under some analyticity assumption \cite{amm}. This initial problem can thus be understood as an isometric embedding problem for $\SM^3$ into some 4-dimensional hyperk\"ahler ambient space.
Note that in \cite{ms13} we gave examples of genuine (i.e. non-Killing) generalized Killing spinors on $\SM^3$, showing that the problem is non-trivial. 

In our first result we show that generalized Killing spinors on any $3$-dimensional spin manifold $(M,g)$ are in one-to-one correspondence with divergence-free orthonormal frames on $M$. Our examples in \cite{ms13} are equivalent in this setting with frames made by Hopf (left or right-invariant) vector fields for the Killing spinors, and to reflexions of such Hopf left or right-invariant frames with respect to some fixed right or left-invariant Hopf vector field, for the genuine generalized Killing spinors.

We next interpret generalized Killing spinors on $\SM^3$ as maps $f:\SM^3\to \SM^3$ whose differential has the following symmetry property: for every $g\in \SM^3$, the linear map $M_g:\mathrm{T}_e\SM^3\to \mathrm{T}_e\SM^3$ defined by $X\mapsto f_*(gX) f(g)^{-1}$ is symmetric with respect to the standard scalar product on $\mathrm{T}_e\SM^3=\RM^3$. Here $\SM^3$ is viewed as a Lie group with Lie algebra $\mathrm{T}_e\SM^3$, $f(g)^{-1}$ denotes infinitesimal left translation with the inverse of $f(g)$, and $gX$ is the value at $g$ of the left invariant vector field generated by $X$.

This point of view is particularly interesting when considering the graph of $f$ as a submanifold of $S^3\times S^3$ endowed with its 3-symmetric nearly K\"ahler metric. It turns out that the above symmetry property of $f$ is equivalent to the fact that the graph $\Gamma_{f^{-1}}$ of the map $g\mapsto f(g)^{-1}$ is a Lagrangian submanifold of 
the nearly K\"ahler $S^3\times S^3$, endowed with its fundamental two-form $\Omega$. Note that the terminology is somewhat improper since $\Omega$ is not closed. Nonetheless, Lagrangian submanifolds of 6-dimensional nearly K\"ahler manifolds were intensively studied in the last decades, perhaps motivated by the fact that they are automatically minimal (cf. Ejiri \cite{e81} in the case of $\SM^6$ and Gutowski-Ivanov-Papadopoulos \cite{gip03} in general).

Until now, the only known examples (up to isometry) of Lagrangian submanifolds of 
the nearly K\"ahler $S^3\times S^3$ were the factors and the diagonal. Our examples of genuine generalized Killing spinors on $\SM^3$ yield in this way new examples of Lagrangian graphs of $S^3\times S^3$, but we have also found an interesting family of Lagrangian submanifolds of $S^3\times S^3$ which project onto a strict submanifold on each factor. We computed the metric structure for each of the examples, which, for the usual normalization of the metric on $S^3\times S^3$, turn out to be round spheres of radius $\frac23$ and $\frac43$, as well as Berger spheres of some different volume. This last observation shows that the space of Lagrangian submanifolds of $S^3\times S^3$ has at least three connected components.

We have also investigated generalized Killing spinors on $\SM^3$ by comparing them to some fixed Killing spinor. In this way, every generalized Killing spinor on $\SM^3$ is characterized by a function $\alpha$ and a vector field $\xi$ satisfying some coupled non-linear differential system. In the particular case where the function $\alpha$ vanishes, the system reduces to the condition that $\xi$ is a geodesic divergence-free vector field. Such objects were studied by Gluck and Gu \cite{gg01}, who showed (using a nice interpretation as holomorphic graphs in the oriented Grassmannian $\tilde{\mathrm{Gr}}_2(\RM^4)$) that they are necessarily Hopf vector fields. Translating back into our setting, we obtain as a corollary that every generalized Killing spinor on $\SM^3$ whose scalar product with some Killing spinor vanishes is necessarily in the list of our known examples. We finally generalize this result to the case when this scalar product is constant but not necessarily zero, by solving an ODE along the orbits of $\xi$. 

\bigskip

\section{Spinors on 3-manifolds and divergence-free frames}

Let $(M^3,g)$ be a 3-dimensional spin manifold. 
Since the spin representation $\Spin(3)\to\mathrm{Aut}(\Sigma_3)$ is isomorphic to the left multiplication of unit quaternions on $\Sigma_3\simeq\HM$, the spinor bundle $\Sigma M$ has a quaternionic structure, acting from the right. This structure is compatible with the natural scalar product $\la\cdot,\cdot\ra$  on the spin bundle, in the sense that $\la\Psi a,\Phi a\ra=|a|^2 \la\Psi ,\Phi \ra$ for every $a\in\HM$ and $\Psi,\Phi\in\Sigma M$.

In particular, if $a\in \mathrm{Im \HM}$ is an imaginary quaternion, then $a^2=-|a|^2$, so for every spinor $\Psi$ we have
$$|a|^2\la\Psi a,\Psi\ra=\la\Psi a^2,\Psi a\ra=-|a|^2\la \Psi,\Psi a\ra,$$
showing that $\Psi a$  is orthogonal  to $\Psi$. On the other hand, for every $x\in M$ and non-zero $\Psi\in\Sigma_xM$, the map $\mathrm{T}_xM\to \Psi^\perp\subset \Sigma_xM$, mapping $X$ to $X\cdot\Psi$ is an isomorphism (for dimensional reasons). For every imaginary quaternion $a$ and nowhere vanishing spinor field $\Psi\in C^\infty (\Sigma M)$ we can thus define a vector field $\xi_a$ on $M$ by
\be\label{xia}\xi_a \cdot \Psi = \Psi a.\ee

We now give a characterization of generalized Killing spinors in terms of the associated unit vector fields $\xi_a$. Recall that every generalized Killing spinor $\Psi$ has constant length. Indeed, from \eqref{def} we get $X(|\Psi|^2)=2\la\nabla_X \Psi,\Psi\ra=2\la A(X)\cdot\Psi,\Psi\ra=0$ for every vector field $X$.

\begin{elem} \label{div}
A spinor $\Psi\in C^{\infty}(\Sigma M)$ of constant length is a generalized Killing spinor if and only if the vector fields $\xi_a$ are divergence-free for all $a\in \mathrm{Im \HM}$. 
\end{elem}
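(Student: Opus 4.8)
The plan is to rephrase the generalized Killing condition as a pointwise algebraic condition on an endomorphism field, and then to differentiate the defining relation \eqref{xia} and read off $\mathrm{div}\,\xi_a$ directly. Since $\Psi$ has constant length, $\langle\nabla_X\Psi,\Psi\rangle=0$, so $\nabla_X\Psi\in\Psi^\perp$ for every $X$; using the isomorphism $\mathrm{T}_xM\to\Psi^\perp$, $X\mapsto X\cdot\Psi$ recalled above, one may write $\nabla_X\Psi=B(X)\cdot\Psi$ for a unique field of endomorphisms $B$ of $\mathrm{T}M$. Because Clifford multiplication by vectors is injective on the nowhere vanishing spinor $\Psi$, the endomorphism $A$ in \eqref{def}, whenever it exists, must equal $B$; hence $\Psi$ is a generalized Killing spinor if and only if $B$ is $g$-symmetric. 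It therefore suffices to show that $B$ is symmetric if and only if $\mathrm{div}\,\xi_a=0$ for all $a\in\mathrm{Im}\,\HM$.

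Next I would differentiate \eqref{xia}. The spin connection is compatible with Clifford multiplication, and right multiplication $\Psi\mapsto\Psi a$ by the constant quaternion $a$ is parallel, since it commutes with the $\Spin(3)$-action. Applying $\nabla_X$ to $\xi_a\cdot\Psi=\Psi a$ and substituting $\nabla_X\Psi=B(X)\cdot\Psi$ (using $B(X)\cdot(\Psi a)=(B(X)\cdot\Psi)a$) gives
\[
(\nabla_X\xi_a)\cdot\Psi=\big(B(X)\cdot\xi_a-\xi_a\cdot B(X)\big)\cdot\Psi .
\]
In dimension $3$ one has $u\cdot v-v\cdot u=2\,u\wedge v$ in the Clifford algebra, and Clifford multiplication by a $2$-form agrees, up to a fixed overall sign depending only on the orientation, with Clifford multiplication by its Hodge-dual $1$-form; thus $u\cdot v-v\cdot u$ acts on spinors as $\pm2\,(u\times v)\cdot$, where $\times$ denotes the cross product. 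Cancelling $\Psi$ via the injectivity of $X\mapsto X\cdot\Psi$ yields $\nabla_X\xi_a=\pm2\,B(X)\times\xi_a$ for all $X$.

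Finally, in a local orthonormal frame $(e_i)$, cyclic invariance of the scalar triple product gives
\[
\mathrm{div}\,\xi_a=\sum_i g(\nabla_{e_i}\xi_a,e_i)=\pm2\sum_i g\big(B(e_i)\times\xi_a,e_i\big)=\pm2\,g\Big(\sum_i e_i\times B(e_i),\,\xi_a\Big).
\]
Writing $V:=\sum_i e_i\times B(e_i)$ and $B(e_i)=\sum_j B_{ji}e_j$, one finds $V=\sum_{i<j}(B_{ji}-B_{ij})\,e_i\times e_j$, so $V$ detects exactly the skew-symmetric part of $B$: since $e_i\wedge e_j\mapsto e_i\times e_j$ is an isomorphism $\Lambda^2\mathrm{T}_xM\to\mathrm{T}_xM$, we have $V\equiv0$ if and only if $B$ is symmetric. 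On the other hand, for each $x$ the map $\mathrm{Im}\,\HM\to\mathrm{T}_xM$, $a\mapsto\xi_a(x)$, is linear and injective (because $|\xi_a|=|a|$) between $3$-dimensional spaces, hence an isomorphism, so $g(V,\xi_a)$ vanishes on $M$ for every $a\in\mathrm{Im}\,\HM$ precisely when $V\equiv0$. Chaining these equivalences, $\xi_a$ is divergence-free for all $a$ exactly when $V\equiv0$, i.e. exactly when $B$ is symmetric, i.e. exactly when $\Psi$ is a generalized Killing spinor.

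The computation is essentially routine; the only point requiring a little care is the dimension-$3$ identification of Clifford multiplication by a $2$-form with that by its dual $1$-form — this is what turns the commutator $B(X)\cdot\xi_a-\xi_a\cdot B(X)$ into Clifford multiplication by an honest vector field and lets one divide out $\Psi$ — together with the remark that the pointwise surjectivity of $a\mapsto\xi_a(x)$ is precisely what promotes ``$g(V,\xi_a)\equiv0$ for all $a$'' to ``$V\equiv0$''. I do not anticipate a genuine obstacle.
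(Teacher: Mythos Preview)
Your proposal is correct and follows essentially the same argument as the paper: you introduce the endomorphism $B$ with $\nabla_X\Psi=B(X)\cdot\Psi$, differentiate the defining relation for $\xi_a$, use the $3$-dimensional identification of $2$-forms with vectors to get $\nabla_X\xi_a=\pm 2\,B(X)\times\xi_a$, and then check that the divergence picks out exactly the skew part of $B$ paired against $\xi_a$. The only cosmetic difference is that you phrase things via the cross product and scalar triple product where the paper writes $\ast(A(X)\wedge\xi_a)$ and $e_i\wedge A(e_i)$; these are the same objects in dimension $3$.
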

\begin{proof}

Since $\Psi$ has constant length, $\nabla_X\Psi$ is orthogonal to $\Psi$ at every point, so by the linearity of the covariant derivative, there exists some endomorphism $A$ of $\mathrm{T}M$ such that 
\be\label{gks}\nabla_X\Psi = A(X) \cdot \Psi\ee 
for any vector field $X$. 
Taking the covariant derivative in the defining equation \eqref{xia} yields
$$
\nabla_X \xi_a \cdot \Psi +  \xi_a \cdot A(X) \cdot \Psi  =\nabla_X(\Psi a)=(\nabla_X\Psi) a= A(X) \cdot \Psi  a 
=  A(X) \cdot \xi_a \cdot \Psi \ .
$$
Hence
$$
\nabla_X \xi_a \cdot \Psi = 2A(X) \wedge \xi_a \cdot \Psi = -2 \ast ( A(X) \wedge \xi_a) \cdot \Psi \ ,
$$
and it follows
\be\label{nxi}
\nabla_X \xi_a = -2 \ast (A(X) \wedge \xi_a) = - 2A(X) \lrcorner \ast \xi_a \ .
\ee
Assume now that $\Psi$ is a generalized Killing spinor, i.e. that the endomorphism $A$ defined in Equation \eqref{gks} is symmetric. We obtain
$$
\delta \xi_a =  -  e_i \lrcorner \nabla_{e_i}\xi_a =  2e_i \lrcorner A(e_i) \lrcorner \ast \xi_a = 0 \ .
$$
(Here and in the following we use Einstein's summation convention over repeated subscripts).

Conversely, if $\xi_a$ are divergence-free, \eqref{nxi} shows that  $e_i \lrcorner A(e_i) \lrcorner \ast \xi_a = 0$  for every  $a\in \mathrm{Im \HM}$, i.e. the 2-form $e_i\wedge A(e_i)$ vanishes. Since this two-form represents the skew-symmetric part of $A$, the lemma follows.

\end{proof}

Since every oriented orthonormal frame labeled $(\xi_i,\xi_j,\xi_k)$ defines (up to sign) a unique spinor of unit length satisfying \eqref{xia} for $a=i,j,k$, the previous lemma gives at once:

\begin{ecor} Generalized Killing spinors on $M$ are (up to sign) in 1-1 correspondence with oriented orthonormal frames of divergence-free vector fields on $M$.
\end{ecor}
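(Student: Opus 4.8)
The plan is to exhibit the two mutually inverse assignments explicitly, the statement being essentially a repackaging of Lemma~\ref{div} together with the remark preceding it. Starting from a generalized Killing spinor $\Psi$, I would first normalise it to unit length (a positive real rescaling changes neither the generalized Killing property nor the vector fields defined by \eqref{xia}, so the correspondence is really with unit-length generalized Killing spinors, a harmless normalisation), and then attach to it the triple $(\xi_i,\xi_j,\xi_k)$ determined by \eqref{xia} for $a=i,j,k$; these are well defined since $\Psi a\in\Psi^\perp$ and $X\mapsto X\cdot\Psi$ is an isomorphism onto $\Psi^\perp$. The first task is to check that this triple is a positively oriented orthonormal frame: taking norms gives $|\xi_a|\,|\Psi|=|\xi_a\cdot\Psi|=|\Psi a|=|a|\,|\Psi|$, hence $|\xi_a|=1$; orthogonality of $\xi_a$ and $\xi_b$ for $a\ne b$ in $\{i,j,k\}$ follows from $\la\xi_a,\xi_b\ra|\Psi|^2=\la\xi_a\cdot\Psi,\xi_b\cdot\Psi\ra=\la\Psi a,\Psi b\ra=0$, the last equality being a short consequence of the quaternionic compatibility $\la\Psi c,\Phi c\ra=|c|^2\la\Psi,\Phi\ra$ and of $\Psi c\perp\Psi$ for imaginary $c$; positivity of the orientation comes from $\xi_i\cdot\xi_j\cdot\Psi=(\xi_i\cdot\Psi)j=\Psi(ij)=\Psi k=\xi_k\cdot\Psi$ together with the action of the volume form in dimension three. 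Lemma~\ref{div} then immediately gives that $\xi_i,\xi_j,\xi_k$ (indeed $\xi_a$ for every $a\in\mathrm{Im}\,\HM$) are divergence-free.

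For the converse I would start from an oriented orthonormal frame $(\xi_i,\xi_j,\xi_k)$ of divergence-free vector fields and invoke the remark preceding the Corollary to produce the unique-up-to-sign unit spinor $\Psi$ satisfying \eqref{xia} for $a=i,j,k$: concretely, trivialising $\Sigma M$ by this frame so that Clifford multiplication by $\xi_a$ becomes left multiplication by $a$ on $\HM$, such a $\Psi$ is a unit quaternion commuting with $i,j,k$, hence $\pm1$, and these glue to a global unit spinor, well defined up to sign. Then for an arbitrary imaginary quaternion $a=a_1i+a_2j+a_3k$, linearity of Clifford multiplication shows that the vector field $a_1\xi_i+a_2\xi_j+a_3\xi_k$ satisfies \eqref{xia} (so it equals $\xi_a$) and is divergence-free, being a linear combination of divergence-free fields; by Lemma~\ref{div}, $\Psi$ is a generalized Killing spinor. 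Finally I would observe that the two constructions are inverse to one another: passing from $\Psi$ to its frame and back returns a unit spinor satisfying \eqref{xia} for $a=i,j,k$, hence $\pm\Psi$ by uniqueness in \eqref{xia}, while passing from a frame to its spinor and back returns the original $\xi_i,\xi_j,\xi_k$ by construction.

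The only point requiring genuine care---and the one I would single out as the main obstacle---is the passage from an orthonormal frame of $M$ to a globally defined unit spinor, well defined up to a single global sign: locally this is the elementary quaternionic computation above, but globally it rests on the spin structure of $M$, which is precisely what resolves the pointwise $\pm$-ambiguity coherently. This is, however, exactly the content of the sentence preceding the Corollary, so in the write-up it may be taken as granted; everything else is routine bookkeeping with the Clifford relations and the quaternionic compatibility of the spinorial scalar product.
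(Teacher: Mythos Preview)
Your proposal is correct and follows exactly the route the paper takes: the paper's own proof is the single sentence preceding the Corollary, invoking the frame-to-spinor bijection together with Lemma~\ref{div}, and you have simply unpacked the details (orthonormality, orientation, inverse constructions, the global sign via the spin structure) that the paper leaves implicit. There is no genuine difference in approach.
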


\section{Spinors on $\SM^3$}

In this section we describe spinors on the round sphere $\SM^3$ and  translate the
generalized Killing equation into conditions on the definig function in a left-invariant frame.

We consider $\SM^3$ as the unit sphere in $\HM$, with the induced Lie group structure. In this way $\SM^3$ is identified to $\SU(2)$ and the Lie algebra of $\SM^3$ is identified with $\mathrm{Im \HM}\simeq \su(2)$. More generally, the tangent space $\mathrm{T}_g\SM ^3$ is identified to $g\mathrm{Im \HM}$ and the infinitesimal left and right translations on $\SM^3$ are given by left or right quaternionic products.
Let $(e_1, e_2, e_3)=(i,j,k)$ be a fixed basis in $\mathrm{T}_e\SM^3 = \mathrm{Im \HM}$ (positively oriented by convention). 
Then left translation  defines an orthonormal frame on $\SM^3$:
$u(g) := (ge_1, ge_2, ge_3)$.  We will take $u$ as a reference frame and we endow $\SM^3$ with the orientation induced by $u$.
The Levi-Civita connection on left-invariant vector fields $X_g=gx$ and $Y_g=gy$ is
given by the well known formula 
\beq\label{xy}(\nabla_{X}Y)_g= \tfrac12 g [x,y],
\eeq 
for any $g\in \SU(2) $
and $x,y \in \su(2)$. More generally, if $Y$ is any vector field on $\SM^3$, we can write $Y_g=gy(g)$ for every $g\in \SM^3$ where $y:\SM^3\to \mathrm{T}_e\SM^3$ is some smooth function, and we have
\be\label{nabla}(\nabla_{X}Y)_g=g\left(\tfrac12 [x,y(g)]+y_*(X)\right).
\ee

With respect to the fixed frame $u$ the  connection $1$-form
$\omega$ of the Levi-Civita connection is  
$$
\omega(X) = \tfrac12 \ad_x = \ast x \in \so(3)\cong \RM^3 \ ,
$$
for every tangent vector $X\in \mathrm{T}_g\SM^3$ written as $X=gx$, $x\in \mathrm{T}_e\SM^3\cong \RM^3$.
Here and henceforth we identify vectors and 1-forms using the Riemannian metric.
We denote by $\tilde u$ a lift of the frame $u$ to a section of the spin principal bundle. Any
spinor $\Psi$ can then be written as
\beq\label{psif}
\Psi=[\tilde u, f],
\eeq
for some function $f$ defined on $\SM^3$ with values in the spin module, which in our case can be identified with $\HM$. Since $ijk=-1$  it follows that $X\cdot Y\cdot Z\cdot\Psi=-\Psi$ for every positive orthonormal base $X,Y,Z$. This shows that the Clifford action of $X$ and $\ast X$ are related by 
\beq\label{ast} \ast X \cdot \Psi = X \cdot \Psi\eeq
for every tangent vector $X$ and for every spinor $\Psi$ (note that in \cite{ms14} the opposite sign convention was used).

The covariant derivative of $\Psi$ with respect to some tangent vector $X=gx=[u(g),x]$ is given by
\be \label{cov}
\nabla_{X}\Psi = [\tilde u,X(f)+\widetilde{\omega(X)} \cdot f] \ .
\ee
Here $A\mapsto \tilde A$ denotes the inverse of the differential of the spin covering, which via the isomorphism $\so(3)\cong\spin(3)$ corresponds to the multiplication by $\frac12$. We infer 
\beq\label{cov1}
[\tilde u, \widetilde{\omega(X)} \cdot f] =[\tilde u, \widetilde{\ast x} \cdot f  ] = \tfrac12 \ast X \cdot \Psi = \tfrac12 X \cdot \Psi\ .
\eeq
In the particular case of a constant function $f$ we thus obtain from \eqref{cov} and \eqref{cov1}:
\be \label{ks}
\nabla_{X}\Psi = [\tilde u, \widetilde{\omega(X)} \cdot f]=\tfrac12 X \cdot \Psi \ .
\ee
Hence any spinor given by a constant function with respect to a left-invariant frame is a
Killing spinor for the Killing constant $\frac12$. Similarly, constant spinors with respect to a right-invariant frame are Killing spinors with
Killing constant $-\frac12$.

Consider the unit vectors $\xi_a$ defined by $\Psi$ via \eqref{xia}.
Our next goal is to interpret the condition $\delta \xi_a = 0$ from Lemma \ref{div} in terms of the function $f$ defining the spinor $\Psi$ in the left-invariant frame $\tilde u$ (cf. Equation \eqref{psif}). 

Writing  $\xi_a(g)= [u, v_a]= g v_a(g)$ for some function $v_a : \SM^3 \rightarrow \mathrm{Im \HM} \cong \RM^3$,
Equation \eqref{xia}  translates into 
\be\label{va}
v_a(g) =  f(g) a f(g)^{-1 }\qquad \hbox{or, equivalently,}\qquad \xi_a(g) =  g f(g) a f(g)^{-1 }  \ .
\ee

\begin{elem} \label{symmetric}
Let $\Psi=[\tilde u,f]$ be a spinor on $\SM^3$ with associated vector fields $\xi_a$ defined by \eqref{xia}. Then the vector fields $\xi_a$ are divergence-free for all $a\in \mathrm{Im \HM}$ if and only if for every $g\in \SM^3$, the endomorphism $M_g$ of $\mathrm{Im \HM}$ defined by $M_g(x):= f_\ast (gx)f^{-1}(g) $ is symmetric. 
\end{elem}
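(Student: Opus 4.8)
The plan is to combine Lemma~\ref{div} with an explicit computation of $\nabla_X\xi_a$ in the left-invariant frame, using formula~\eqref{nabla} for the Levi-Civita connection applied to the vector field $\xi_a(g)=gv_a(g)$ with $v_a(g)=f(g)af(g)^{-1}$. By Lemma~\ref{div}, the vector fields $\xi_a$ are all divergence-free precisely when $\Psi$ is a generalized Killing spinor; so I must show that the symmetry of all $M_g$ is equivalent to this. First I would differentiate the relation $v_a(g)=f(g)af(g)^{-1}$ to express $(v_a)_*(X)$ in terms of $f_*(X)$ and $f(g)$. Writing $w(g):=f_*(gx)f(g)^{-1}$ for $X=gx$ (so $w(g)=M_g(x)\in\mathrm{Im}\,\HM$, since $|f|$ is constant hence $f_*(X)f^{-1}$ is imaginary), the product rule gives $(v_a)_*(X)=[\,w(g),\,v_a(g)\,]$. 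Plugging into~\eqref{nabla} yields
\[
(\nabla_X\xi_a)_g=g\left(\tfrac12[x,v_a(g)]+[w(g),v_a(g)]\right).
\]

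Next I would compute the divergence $\delta\xi_a=-\sum_i \langle\nabla_{e_i\cdot g}\xi_a,\,e_i\cdot g\rangle$ using the left-invariant orthonormal frame $u(g)=(ge_1,ge_2,ge_3)$, i.e. $X=ge_i$, so $x=e_i$ and $w(g)=M_g(e_i)$. The term $\sum_i\langle[e_i,v_a],e_i\rangle$ vanishes by skew-symmetry of $\mathrm{ad}$ (or directly: $\langle[e_i,v],e_i\rangle=-\langle v,[e_i,e_i]\rangle=0$ after antisymmetrizing). Hence
\[
\delta\xi_a=-2\sum_i\langle[M_g(e_i),v_a(g)],e_i\rangle
=-2\sum_i\langle M_g(e_i),[v_a(g),e_i]\rangle .
\]
Now I would recognize the right-hand side as (twice) the contraction of the endomorphism $M_g$ with the skew-symmetric endomorphism $\mathrm{ad}_{v_a(g)}$: indeed $\sum_i\langle M_g(e_i),\mathrm{ad}_{v_a(g)}e_i\rangle=-\langle M_g,\mathrm{ad}_{v_a(g)}\rangle$ is the trace pairing, which picks out only the skew-symmetric part of $M_g$. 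So $\delta\xi_a=0$ for all three values $a=i,j,k$ (equivalently for all $a\in\mathrm{Im}\,\HM$, by linearity in $v_a$) if and only if the skew-symmetric part of $M_g$ is orthogonal to $\mathrm{ad}_{v}$ for every $v$ in the set $\{v_a(g):a\in\mathrm{Im}\,\HM\}$; but that set is $\{f(g)af(g)^{-1}\}=\mathrm{Im}\,\HM$ (conjugation by the unit quaternion $f(g)$ is an automorphism of $\su(2)$), and $v\mapsto\mathrm{ad}_v$ spans all of $\so(3)$. Therefore the condition is equivalent to the skew-symmetric part of $M_g$ vanishing, i.e. to $M_g$ being symmetric, for every $g$.

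The main obstacle I anticipate is purely bookkeeping: getting the constants and signs right in the identification $\mathrm{ad}_x=2\ast x$ (used already in the connection form $\omega(X)=\frac12\mathrm{ad}_x=\ast x$) and making sure the trace-pairing argument is stated cleanly — that the bilinear form $(B,v)\mapsto\sum_i\langle B(e_i),[v,e_i]\rangle$ on $\End(\mathrm{Im}\,\HM)$ annihilates symmetric $B$ and is nondegenerate on skew-symmetric $B$ when $v$ ranges over $\mathrm{Im}\,\HM$. An alternative, slightly slicker route avoiding frame computations is to start from~\eqref{nxi}, which already gives $\nabla_X\xi_a=-2A(X)\lrcorner\ast\xi_a$, and simply relate the endomorphism $A$ of~\eqref{gks} to $M_g$: differentiating~\eqref{xia} in the form $\xi_a\cdot\Psi=\Psi a$ and using $\nabla_X\Psi=[\tilde u,X(f)+\frac12 X\cdot f]$ one finds that $A(X)$ corresponds, up to the universal constant and the Clifford identification~\eqref{ast}, to the imaginary quaternion $\ast(f_*(gx)f(g)^{-1})=\ast M_g(x)$ shifted by $\frac12 x$; then $A$ is symmetric iff $x\mapsto\ast M_g(x)$ is, iff $M_g$ is symmetric (since $\ast$ intertwines symmetric and skew parts appropriately in dimension $3$), and one concludes by Lemma~\ref{div}. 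I would present whichever of the two is shorter once the constants are pinned down, but the frame computation above is the more self-contained option.
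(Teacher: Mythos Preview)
Your approach is essentially the same as the paper's: both compute $\nabla_X\xi_a$ directly from \eqref{nabla} and \eqref{va}, note that the $\tfrac12[e_i,v_a]$ term contributes nothing to the divergence by ad-invariance, and then argue that the remaining expression vanishes for all $a$ iff the skew part of $M_g$ is zero. The only cosmetic difference is in this last step: the paper rewrites the sum as $\langle[e_i,M_g(e_i)],b\rangle=2\langle\mathrm{vol},e_i\wedge M_g(e_i)\wedge b\rangle$ and lets $b$ range over $\mathrm{Im}\,\HM$, while you phrase it as the trace pairing of $M_g$ with $\mathrm{ad}_{v_a}$ and let $v_a$ range over $\mathrm{Im}\,\HM$; these are the same argument. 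Two small bookkeeping slips to fix: the frame vectors are $ge_i$, not $e_i\cdot g$, and your factor of $2$ in $\delta\xi_a=-2\sum_i\langle[M_g(e_i),v_a],e_i\rangle$ is spurious (the derivative term $(v_a)_*=[w,v_a]$ enters with coefficient $1$, not $2$).
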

\begin{proof}
We compute the covariant derivative of $\xi_a$ in the
direction of some vector $X_g=gx$ (with $x\in\mathrm{Im \HM}$) and obtain from \eqref{nabla} and \eqref{va}
$$
(\nabla_{X} \xi_a)_g = \frac12 g\left( [x, f(g) a f^{-1}(g)] + f_\ast (gX)af^{-1}(g)
-f(g)af^{-1}(g) f_\ast (gX)f^{-1}(g)  \right) \ .
$$
Using this formula we may calculate the divergence of the vector field $\xi_a$, where 
we set $b:= f(g) a f^{-1}(g)$:
\bea
-(\delta \xi_a)_g &=& \tfrac12 \la [e_i, f(g)af^{-1}(g) ], e_i \ra+ \la f_\ast (ge_i) f^{-1}(g)b , e_i\ra - \la b f_\ast (ge_i)  f^{-1}(g), e_i\ra
\\[1.5ex]
&=& \la M_g(e_i), -e_i b + b e_i\ra = \la M_g(e_i), [b,e_i] \ra = \la[e_i, M_g(e_i)], b \ra \\[1.5ex]
&=& 2\la \mathrm{vol}, e_i \wedge M_g(e_i) \wedge b\ra\ .
\eea
For any fixed $g$, when $a$ runs through $\mathrm{Im \HM}$, $b$ takes any value in $\mathrm{Im \HM}$. It follows
that $\delta \xi_a = 0$ for all $a$ if and only if $e_i \wedge M_g(e_i) \wedge b = 0$ for
all $b$, which is equivalent to $e_i \wedge M_g(e_i) = 0$ and finally to $M_g$ being symmetric.

\end{proof}

\begin{exe} \label{egks} In \cite{ms14} we have constructed the following examples of generalized Killing spinors on $\SM^3$:
\begin{enumerate}
\item Killing spinors with constant $\frac12$.
\item Killing spinors with constant $-\frac12$.
\item Products $\xi\cdot\Phi$ where $\xi$ is right-invariant and $\Phi$ is a Killing spinor with constant $\frac12$.
\item Products $\xi\cdot\Phi$ where $\xi$ is left-invariant and $\Phi$ is a Killing spinor with constant $-\frac12$.
\end{enumerate}
\end{exe}
\begin{elem} \label{ex}
Up to left multiplication of $f$ with some constant quaternion, the above examples of generalized Killing spinors
correspond to the following functions and vector fields. 
$$
\begin{array}{lll}
& (1) \quad f(g)   =  1                                  \qquad    &  \xi_a(g) = g a \\
& (2) \quad f(g)   = g^{-1}                                   \qquad     &  \xi_a(g)  = a g \\
& (3) \quad f(g)   = g^{-1}bg                        \qquad    &  \xi_a(g)  = b gag^{-1}b^{-1}g \\
& (4) \quad f(g)   = bg^{-1}                                 \qquad    &  \xi_a(g)  = gbg^{-1}agb^{-1}
\end{array}
$$
\end{elem}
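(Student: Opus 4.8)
The plan is to determine, for each of the four families of Example~\ref{egks}, the function $f$ that represents the spinor in the reference frame $\tilde u$, and then to obtain the vector fields $\xi_a$ by substituting $f$ into \eqref{va}. Two bookkeeping facts are needed. First, under the identification $\Sigma_3\cong\HM$ the Clifford multiplication by an imaginary quaternion is left quaternionic multiplication, $x\cdot w=xw$ for $x\in\mathrm{Im}\HM$ and $w\in\HM$; this is the convention compatible with $ijk=-1$ and \eqref{ast} under which \eqref{xia} becomes \eqref{va} (writing $\xi_a=[u,v_a]$ gives $v_a\cdot f=v_af=fa$, i.e. $v_a=faf^{-1}$). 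In particular, if $\xi=[u,v]$ and $\Psi=[\tilde u,f]$ then $\xi\cdot\Psi=[\tilde u,vf]$. Second, the right-invariant frame $u'(g)=(e_1g,e_2g,e_3g)$ satisfies $u'=u\cdot\Ad_{g^{-1}}$, since $e_ig=g(g^{-1}e_ig)$; because the spin covering is $q\mapsto\Ad_q$, this lifts to $\tilde u'(g)=\tilde u(g)\cdot g^{-1}$, so a spinor equal to a constant $c$ in the right-invariant frame is $[\tilde u,g^{-1}c]$ in the reference frame.

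With these two facts the four cases are short computations. For (1), the Killing spinors with constant $\tfrac12$ are exactly the $[\tilde u,c]$ with $c$ a constant quaternion (one direction is \eqref{ks}; the converse is immediate from \eqref{cov} and \eqref{cov1}, which give $X(f)=0$ for all $X$); after normalizing $f$ to $1$ we get $\xi_a(g)=gf(g)af(g)^{-1}=ga$. For (2), a Killing spinor with constant $-\tfrac12$ is $[\tilde u,g^{-1}c]$ by the second fact, so after normalization $f(g)=g^{-1}$ and $\xi_a(g)=g\,g^{-1}a\,g=ag$. For (3), a right-invariant $\xi$ generated by $b$ equals $\xi(g)=bg=[u,g^{-1}bg]$, whence $\xi\cdot\Phi=[\tilde u,(g^{-1}bg)c]$ for $\Phi=[\tilde u,c]$ a Killing spinor with constant $\tfrac12$; after normalization $f(g)=g^{-1}bg$ and $\xi_a(g)=g(g^{-1}bg)a(g^{-1}bg)^{-1}=bgag^{-1}b^{-1}g$. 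For (4), a left-invariant $\xi$ generated by $b$ equals $\xi(g)=gb=[u,b]$, whence $\xi\cdot\Phi=[\tilde u,bg^{-1}c]$ for $\Phi=[\tilde u,g^{-1}c]$ a Killing spinor with constant $-\tfrac12$; after normalization $f(g)=bg^{-1}$ and $\xi_a(g)=g(bg^{-1})a(bg^{-1})^{-1}=gbg^{-1}agb^{-1}$.

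I expect the only delicate points to be the two conventions above: identifying Clifford multiplication on $\Sigma_3\cong\HM$ with \emph{left} quaternionic multiplication, and getting the spin lift $\tilde u'=\tilde u\cdot g^{-1}$ of the passage from the left- to the right-invariant frame — a left/right interchange or a sign slip in either would corrupt cases (2)--(4). Everything else is routine quaternion algebra. Note that in (1) the residual constant is absorbed by a left multiplication of $f$, whereas in (2)--(4) it sits on the right of $f$ and is removed by the quaternionic structure $\Psi\mapsto\Psi c^{-1}$, which preserves the generalized Killing equation with the same endomorphism $A$ (by \eqref{def}); the imaginary quaternion $b$ then survives as a genuine parameter of the families (3) and (4).
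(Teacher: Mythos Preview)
Your proof is correct and follows essentially the same route as the paper: identify the function $f$ representing each spinor in the frame $\tilde u$ and then read off $\xi_a$ from \eqref{va}. The only variation is in case~(2), where you deduce $f(g)=g^{-1}c$ from the spin lift $\tilde u'=\tilde u\cdot g^{-1}$ of the frame change $u'=u\cdot\Ad_{g^{-1}}$, whereas the paper instead verifies directly via \eqref{cov} that $f(g)=g^{-1}$ satisfies $\nabla_X\Psi=-\tfrac12 X\cdot\Psi$; your closing remark that the residual constant in cases (2)--(4) actually sits on the \emph{right} of $f$ (and is absorbed by the quaternionic structure rather than by a left multiplication) is a correct clarification that the paper's proof handles the same way (``up to right multiplication with a constant'') without commenting on the discrepancy with the lemma's wording.
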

\begin{proof}
Consider a left-invariant frame $u$ and its spin lift $\tilde u$ as before. We have already seen in Equation \eqref{ks} that Killing spinors with constant $\frac12$ correspond to constant functions $f$ in the frame $\tilde u$. Consider the spinor $\Psi:=[\tilde u, f]$ with $f(g)=g^{-1}$. For every vector $X\in \mathrm{T}_g\SM^3$, written as $X=gx=[u,x]$, the derivative of the $\HM$-valued function $f$ with respect to $X$ reads $X(f)=-xg^{-1}=-x\cdot f$.
Using Equation \eqref{cov} we can thus compute the covariant derivative of $\Psi$:
$$\nabla_{X}\Psi = [\tilde u,X(f)+\widetilde{\omega(X)} \cdot f]=[\tilde u,X(f)]+\tfrac12X\cdot\Psi=-[u,x]\cdot[\tilde u,f]+\tfrac12X\cdot\Psi=-\tfrac12X\cdot\Psi.$$
This proves case (2). 

If $\xi$ is right-invariant and $\Phi$ is a Killing spinor with constant $\frac12$ we can write $\xi_g=bg=[u,g^{-1}bg]$ for some $b\in  \mathrm{Im \HM}$ and (up to right multiplication with a constant) $\Phi=[\tilde u,1]$, whence $\xi\cdot\Phi=[\tilde u, g^{-1}bg]$. Similarly, if $\xi$ is left-invariant and $\Phi$ is a Killing spinor with constant $-\frac12$ we can write $\xi_g=gb=[u,b]$ for some $b\in  \mathrm{Im \HM}$ and (up to right multiplication with a constant) $\Psi=[\tilde u,g^{-1}]$, whence $\xi\cdot\Phi=[\tilde u, bg^{-1}]$. The corresponding formulas for $\xi_a$ follow from Equation \eqref{va}.

\end{proof}

\section{Lagrangian graphs}

The graph $\Gamma_f$ of a smooth map $f: \SM^3 \rightarrow \SM^3$ defines a submanifold in
$S^3 \times S^3$. In this section we want to show that the symmetry condition in Lemma \ref{symmetric} above
translates into the the fact that the graph $\Gamma_{f^{-1}}$ of the map $g\mapsto f(g)^{-1}$ is Lagrangian with respect to a certain non-degenerate 2-form on $S^3 \times S^3$.

We identify $S^3 \times S^3$ with the homogeneous space $S^3 \times S^3 \times S^3/\Delta(S^3)$
via the action 
$$(g_1, g_2, g_3) \cdot (a_1, a_2 ) := (g_1a_1g^{-1}_3, g_2a_2g^{-1}_3).$$ 
The stabilizer of $(e,e)$ is then the diagonal of $S^3 \times S^3 \times S^3$ and the projection 
$\pi : S^3\times S^3 \times S^3 \rightarrow S^3 \times S^3$ is given by
$\pi(g_1, g_2, g_3) = (g_1g_3^{-1}, g_2g_3^{-1})$.

The tangent space at $(e,e)$ is identified with 
$$\m:=\{(X_1, X_2, X_3) | X_i \in \su(2), X_1 + X_2 + X_3 = 0 \}.$$
In this identification, a tangent 
vector $(Y_1, Y_2)$ corresponds to
\beq\label{id}
\pi^\m(Y_1, Y_2, 0)  - \tfrac13 (Y_1 + Y_2,Y_1 + Y_2,Y_1 + Y_2 )
= \tfrac13(2Y_1 - Y_2, 2Y_2 -Y_1, -Y_1 - Y_2) \ .
\eeq

Let $B$ be the Killing form of $\su_2$, and denote by $B_0 := \frac{1}{12}B$ its rescaling. Then 
$$
g((X_1, X_2, X_3), (Y_1, Y_2, Y_3)) :=- (B_0(X_1, Y_1) + B_0(X_2,Y_2) + B_0(X_3,Y_3))
$$
defines a homogeneous nearly K\"ahler metric of scalar curvature $\scal = 30$ on $S^3\times S^3$ (cf. \cite{ms10}, Lemma 5.4). Denoting $-B_0$ simply by $\la\cdot,\cdot \ra$, and using the identification \eqref{id}, the induced metric on $S^3\times S^3$ reads
\beq\label{g}
g((X_1, X_2), (Y_1, Y_2)) =
\tfrac13( 2 \la X_1, Y_1\ra + 2\la X_2, Y_2 \ra - \la X_1, Y_2\ra - \la X_2, Y_1\ra ) \ .
\eeq

The manifold $S^3 \times S^3$ has the structure of  a $3$-symmetric space. The corresponding
almost complex structure is defined as
$$
J(X_1, X_2, X_3) = \tfrac{2}{\sqrt{3}} (X_3, X_1, X_2) + \tfrac{1}{\sqrt{3}} (X_1, X_2, X_3) \ ,
$$
which by \eqref {id} can be rewritten as
$$
J(X_1, X_2) = \tfrac{1}{\sqrt{3}} (X_1 - 2 X_2, 2X_1 - X_2)\ .
$$
Let $\Omega$ be the fundamental $2$-form $\Omega (A,B) = g(JA, B)$, then
$$
\Omega((X_1, X_2), (Y_1, Y_2))
= \tfrac{1}{\sqrt{3}} (\la X_1,  Y_2 \ra - \la X_2, Y_1\ra) \ .
$$
At an arbitrary point 
$(g_1, g_2) \in S^3 \times S^3$, the $2$-form $\Omega$ is defined by
\beq\label{omega}
\Omega((X_1, X_2),(Y_1,Y_2)) := \tfrac{1}{\sqrt{3}} (\la g_1^{-1}X_1, g_2^{-1}Y_2    \ra
- \la g_2^{-1}X_2, g_1^{-1} Y_1 \ra) \ .
\eeq

A $3$-dimensional submanifold $L$ of the nearly K\"ahler manifold $S^3 \times S^3$
is called {\it Lagrangian} if $\Omega(A,B) = 0$ for all tangent vectors $A, B \in \mathrm{T}L$. Notice
that this is a generalization of the usual concept of the a Lagrangian submanifold since
 the fundamental $2$-form $\Omega$ is not closed.

\begin{elem}\label{lag}
Let $f: \SM^3 \rightarrow \SM^3$ be a smooth map. Then the endomorphism $M_g$ defined
in Lemma \ref{symmetric} is symmetric for all $g$
if and only if the graph $\Gamma_{f^{-1}}$ of $f^{-1}$ is a Lagrangian submanifold
of $S^3 \times S^3$ with respect to the $2$-form $\Omega$.
\end{elem}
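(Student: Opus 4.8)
The plan is to restrict the $2$-form $\Omega$ to the tangent spaces of $\Gamma_{f^{-1}}$ and read off the equivalence directly from formula \eqref{omega}. First I would fix $g\in\SM^3$ and describe the tangent space of $\Gamma_{f^{-1}}$ at the point $(g_1,g_2):=(g,f(g)^{-1})$: it is spanned by the vectors $(X,(f^{-1})_\ast X)$ with $X=gx$, $x\in\mathrm{Im}\,\HM$. Differentiating $g\mapsto f(g)^{-1}$ along a curve $\gamma$ with $\dot\gamma(0)=gx$ and using $\frac{d}{dt}h(t)^{-1}=-h(t)^{-1}h'(t)h(t)^{-1}$ for $h(t)=f(\gamma(t))$, one obtains
$$
(f^{-1})_\ast(gx)=-f(g)^{-1}\,f_\ast(gx)\,f(g)^{-1}=-f(g)^{-1}M_g(x),
$$
the second equality being just the definition of $M_g$ from Lemma \ref{symmetric}; note that $M_g(x)\in\mathrm{Im}\,\HM$, so the right-hand side indeed lies in $f(g)^{-1}\mathrm{Im}\,\HM=\mathrm{T}_{g_2}\SM^3$.

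Next I would insert the tangent vectors $(X_1,X_2)=(gx,-f(g)^{-1}M_g(x))$ and $(Y_1,Y_2)=(gy,-f(g)^{-1}M_g(y))$ into \eqref{omega}. With $g_1=g$, $g_2=f(g)^{-1}$ one gets $g_1^{-1}X_1=x$, $g_2^{-1}X_2=-M_g(x)$, and likewise for $Y$, hence
$$
\Omega\big((X_1,X_2),(Y_1,Y_2)\big)=\tfrac1{\sqrt3}\big(\la x,-M_g(y)\ra-\la-M_g(x),y\ra\big)=\tfrac1{\sqrt3}\big(\la M_g(x),y\ra-\la x,M_g(y)\ra\big).
$$
As $\Gamma_{f^{-1}}$ is a $3$-dimensional submanifold whose tangent space at each point is precisely the set of such pairs, it is Lagrangian if and only if this expression vanishes for all $x,y\in\mathrm{Im}\,\HM$ and all $g$, that is, if and only if each $M_g$ is symmetric with respect to $\la\cdot,\cdot\ra=-B_0$. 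Since $-B_0$ is proportional to the standard scalar product on $\mathrm{Im}\,\HM$, this is exactly the condition of Lemma \ref{symmetric}, which proves the equivalence.

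The computation is essentially routine once the conventions are fixed; the only points requiring a little care are the sign in the differential of the inversion map $h\mapsto h^{-1}$ and the bookkeeping of the left-translation identifications $\mathrm{T}_{g_i}\SM^3\cong g_i\,\mathrm{Im}\,\HM$ that are implicit in formula \eqref{omega}. I do not anticipate any genuine obstacle.
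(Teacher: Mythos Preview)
Your proposal is correct and follows essentially the same approach as the paper: compute the tangent space of $\Gamma_{f^{-1}}$ at $(g,f(g)^{-1})$ via the differential of the inversion map, plug two such tangent vectors into formula \eqref{omega}, and observe that the result is (up to sign) $\tfrac{1}{\sqrt3}(\la M_g(x),y\ra-\la x,M_g(y)\ra)$. Your write-up is in fact slightly more explicit than the paper's about the derivative of $h\mapsto h^{-1}$ and about the scalar product $-B_0$ being the standard one, but there is no substantive difference.
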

\proof
The tangent space to the graph $\Gamma_{f^{-1}}$ at $ (g, f(g)^{-1})$ is the set of vectors of the form
$(gx,-f(g)^{-1}f_\ast(gx)f(g)^{-1})$ for $x \in \su(2)$. 

By \eqref{omega}, the value of $\Omega$
at the point $(g,f(g)) $ on two such tangent vectors is 
$$
-\tfrac{1}{\sqrt{3}} (\la x ,f_\ast(gy) f(g)^{-1}  \ra  - \la f_\ast(gx)f(g)^{-1}  , y   \ra) =-\tfrac{1}{\sqrt{3}} (\la x ,M_g (y)\ra -\la M_g(x),y\ra)\ ,
$$
which shows that $\Gamma_{f^{-1}}$ is Lagrangian with respect to $\Omega$ if and only if $M_g$ is symmetric for all $g\in \SM^3$.

\qed

%

Let $L \subset S^3 \times S^3$ be a $3$-dimensional Lagrangian submanifold. Then the
isometries of $S^3\times S^3$ define deformations of $L$. Indeed the isometry group
of the $3$-symmetric space $S^3 \times S^3$ is $9$-dimensional, whereas the isometry
group of $L$ is at most $6$-dimensional, obtained in the case of the round $3$-sphere.
Hence there remains an at least $3$-dimensional space of normal deformations.

In \cite{ss10} (see also \cite{gip03}) it was shown that for every infinitesimal deformation transversal to the diffeomorphisms of $L$, the so called variation $1$-form is a co-closed eigenform of the Hodge-Laplace operator of $L$ for the eigenvalue $9$, when the scalar curvature of the nearly K\"ahler manifold $S^3 \times S^3$ is normalized to $30$.

\begin{epr}
\begin{enumerate}
\item The submanifolds
$\Gamma_1:= \{ (g, 1) | g \in \SM^3 \} $ and $\Gamma_2: = \{ (g, g) | g \in \SM^3 \} $ 
are Lagrangian submanifolds of $S^3\times S^3$ isometric to  the round sphere  $\SM^3(\frac23)$, of volume $\frac{8}{27}\vol(\SM^3)$. 
\item For every $b\in \SM^2\subset \SM^3$, the submanifolds
 and 
$\Gamma_3(b):= \{ (g, g^{-1}b g) | g \in \SM^3 \} $ and $\Gamma_4(b): = \{ (g, gb) | g \in \SM^3 \} $ are Lagrangian submanifolds of $S^3\times S^3$ isometric to  the Berger sphere obtained from 
$\SM^3(\frac{2}{\sqrt{3}})$ by rescaling the metric on the fibres of the Hopf fibration by a factor $\frac{1}{\sqrt{3}}$. Their volume is equal to $\frac{24}{27}\vol(\SM^3)$.
\item
For every $a, b \in \SM^2 \subset \SM^3, a \perp b$, the submanifolds $L(a,b):=\{gag^{-1}, gbg^{-1}) | g \in \SM^3  \}$ are are Lagrangian submanifolds of $S^3\times S^3$ isometric to  the round sphere  $\SM^3(\frac43)$, 
of volume $\frac{64}{27}\vol(\SM^3)$. \
\end{enumerate}
\end{epr}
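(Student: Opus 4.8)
\emph{Strategy.} The plan is to treat the three parts separately; for each I would first verify the Lagrangian condition and then identify the induced metric. The one thing to keep track of is the normalization: in \eqref{g} the bracket $\la\cdot,\cdot\ra=-B_0=\tfrac1{12}B$ equals, on each $\SM^3$ factor, $\tfrac23$ times the round metric of radius $1$ (the one in which $(i,j,k)$ is orthonormal in $\mathrm{Im}\,\HM$); writing $|\cdot|$ for that round norm we have $\la x,x\ra=\tfrac23|x|^2$, and a sphere with metric $\lambda|\cdot|^2$ has radius $\sqrt\lambda$ and volume $\lambda^{3/2}\vol(\SM^3)$. The computation is always the same: parametrize the submanifold by $g\in\SM^3$, compute the two tangent components of the parametrization at $g$ in the direction $gx$, left-translate them to $\su(2)$, and insert them into \eqref{omega} (Lagrangian condition) and \eqref{g} (metric). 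For the three families of graphs one may alternatively go through Lemma \ref{lag}.

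\emph{The Lagrangian property.} For $\Gamma_1$ and $\Gamma_2$ the left-translated components at $g$ are $(x,0)$ and $(x,x)$, and the two terms of \eqref{omega} cancel identically. For $\Gamma_3(b)$ at $(g,g^{-1}bg)$ they are $(x,2\Pi_c x)$, where $c:=g^{-1}bg\in\SM^2$ and $\Pi_c$ is the orthogonal projection of $\mathrm{Im}\,\HM$ onto $c^\perp$ (using $c^2=-1$, so $\Ad_c=\mathrm{id}-2\Pi_c$); for $\Gamma_4(b)$ at $(g,gb)$ they are $(x,\Ad_b x)$. In \eqref{omega} these give $\tfrac2{\sqrt3}(\la x,\Pi_c y\ra-\la\Pi_c x,y\ra)$ and $\tfrac1{\sqrt3}(\la x,\Ad_b y\ra-\la\Ad_b x,y\ra)$, both of which vanish because $\Pi_c$ and $\Ad_b$ are self-adjoint; so all four graphs are Lagrangian. (Alternatively: $\Gamma_1=\Gamma_{f^{-1}}$ for $f\equiv1$, $\Gamma_2=\Gamma_{f^{-1}}$ for $f(g)=g^{-1}$, and, as $b^{-1}=-b$, the graphs $\Gamma_3(b)$ and $\Gamma_4(b)$ are $\Gamma_{f^{-1}}$ for the functions $f$ of cases (3)--(4) of Lemma \ref{ex} with parameter $-b\in\SM^2$; these define generalized Killing spinors, so $M_g$ is symmetric by Lemmas \ref{div} and \ref{symmetric}, and Lemma \ref{lag} gives the claim.) For $L(a,b)$, which is not a graph, I would parametrize by $\phi(g)=(gag^{-1},gbg^{-1})$; its differential in the direction $gx$ has left-translated components $\Ad_g(\Ad_{a^{-1}}x-x)=-2\,\Ad_g\Pi_a x$ and $-2\,\Ad_g\Pi_b x$ (again using $a^2=b^2=-1$). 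These vanish simultaneously only for $x=0$, so $\phi$ is an immersion, and $\phi(g)=\phi(g')$ forces $g'g^{-1}$ into the common centraliser $\{\pm1\}$ of $a$ and $b$, so $\phi$ is a two-fold covering onto the embedded submanifold $L(a,b)\cong\SM^3/\{\pm1\}$. Inserting these components into \eqref{omega} and using that $\Ad_g$ preserves $\la\cdot,\cdot\ra$, the value of $\Omega$ on the tangent vectors coming from $x$ and $y$ reduces to a multiple of $\la\Pi_a x,\Pi_b y\ra-\la\Pi_b x,\Pi_a y\ra=\la x,[\Pi_a,\Pi_b]y\ra$, and $[\Pi_a,\Pi_b]=0$ precisely when $a\perp b$: this is the only place where the orthogonality hypothesis is used.

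\emph{The induced metrics.} For $\Gamma_1$ and $\Gamma_2$, \eqref{g} on $(x,0)$, respectively $(x,x)$, gives $\tfrac23\la x,x\ra=\tfrac49|x|^2$: the round metric of radius $\tfrac23$, of volume $\tfrac8{27}\vol(\SM^3)$. For $\Gamma_3(b)$ and $\Gamma_4(b)$, \eqref{g} on $(x,2\Pi_c x)$, respectively $(x,\Ad_b x)$, gives (using $\la x,\Pi_c x\ra=|\Pi_c x|^2$, respectively $|\Ad_b x|=|x|$) a metric whose eigenvalues against the round metric of radius $1$ are $\tfrac49$ along the unit Hopf vector field $g\mapsto bg$ (respectively $g\mapsto gb$) and $\tfrac43$ on its orthogonal complement; this is precisely the Berger sphere obtained from $\SM^3(\tfrac2{\sqrt3})$ by multiplying the Hopf-fibre part of the metric by $\bigl(\tfrac1{\sqrt3}\bigr)^2=\tfrac13$, of volume $\sqrt{\tfrac49\cdot\tfrac43\cdot\tfrac43}\,\vol(\SM^3)=\tfrac{24}{27}\vol(\SM^3)$. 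Finally, for $L(a,b)$, \eqref{g} on $(-2\Ad_g\Pi_a x,-2\Ad_g\Pi_b x)$ equals $\tfrac83\bigl(|\Pi_a x|^2+|\Pi_b x|^2-\la\Pi_a x,\Pi_b x\ra\bigr)$; choosing a round-orthonormal basis $a,b,c$ of $\mathrm{Im}\,\HM$ and expanding, this collapses (because $a\perp b$) to $\tfrac83\cdot\tfrac23|x|^2=\tfrac{16}9|x|^2$, so $\phi^*g$ is the round metric of radius $\tfrac43$; as $\phi$ is a double cover, $L(a,b)$ is a round spherical space form, and the volume of its $\SM^3$-cover is $\bigl(\tfrac43\bigr)^3\vol(\SM^3)=\tfrac{64}{27}\vol(\SM^3)$.

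\emph{The main point.} None of this is conceptually hard; the work is the careful bookkeeping of left translations, the constant $\tfrac23$ relating $-B_0$ to the round metric of radius one, the identification of the two middle examples as standard Berger spheres, and the elementary linear-algebra fact that $[\Pi_a,\Pi_b]=0$ if and only if $a\perp b$, which controls both the Lagrangian property and the roundness of $L(a,b)$.
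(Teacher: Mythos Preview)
Your argument is correct and follows the paper's own approach: the paper simply says that the metric is ``an immediate consequence of \eqref{g}'' and that the Lagrangian property follows from Lemmas \ref{ex} and \ref{lag} for the graphs and from a direct check of \eqref{omega} for $L(a,b)$; you carry out exactly these computations in full. Your careful remark that $\phi$ is a two-fold covering (so that $L(a,b)\cong\SM^3/\{\pm1\}$ rather than $\SM^3$) is in fact sharper than the paper's statement, and your reading of ``rescaling the metric on the fibres by $\tfrac1{\sqrt3}$'' as a length rescaling is the one forced by the stated volume $\tfrac{24}{27}\vol(\SM^3)$. One very minor imprecision: $[\Pi_a,\Pi_b]=0$ also when $a=\pm b$, not only when $a\perp b$, but only the implication $a\perp b\Rightarrow[\Pi_a,\Pi_b]=0$ is needed.
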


\proof The metric structure of the above submanifolds is an immediate consequence of \eqref{g}. The Lagrangian property follows from Lemmas \ref{ex} and \ref{lag} in the first two cases, as $\Gamma_1$, $\Gamma_2$, $\Gamma_3(b)$ and $\Gamma_4(-b)$ are the graphs $\Gamma_{f^{-1}}$ for $f$ in one of the cases (1)--(4) of Lemma \ref{ex} respectively. The verification of the fact that $L(a,b)$ are Lagrangian is straightforward using \eqref{omega}.

\qed

It is well known that the Laplace spectrum on co-closed $1$-forms on the round
sphere $\SM^3(r)$ is given by  $\{  \frac{k^2}{r^2} | k = 2,3, \ldots  \}$. Note that the
first eigenspace, corresponding to the eigenvalue $\frac{4}{r^2} $, is exactly
the space of (1-forms dual to) Killing vector fields.

\begin{ecor}
The radius of a round Lagrangian sphere in $S^3 \times S^3$ is necessarily of the form $\frac{k}{3}$
for some integer $k\ge2$. The values $k=2$ and $k=4$ are realized in our
examples.
\end{ecor}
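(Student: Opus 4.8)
The plan is to combine the obvious divisibility constraint with the two existing constructions. First I would recall from Lemma~\ref{lag} and the normalization of the metric that any Lagrangian submanifold of $S^3\times S^3$ which happens to be a round sphere carries a metric of constant curvature, say the round sphere $\SM^3(r)$ of radius $r$. By the result quoted just before the corollary from \cite{ss10,gip03}, every infinitesimal Lagrangian deformation transversal to the diffeomorphism group of $L$ produces a co-closed $1$-form on $L$ which is an eigenform of the Hodge--Laplace operator for the eigenvalue $9$ (with $\scal=30$ for $S^3\times S^3$). On $\SM^3(r)$ the co-closed $1$-form spectrum is $\{k^2/r^2 : k\ge 2\}$, as recalled in the remark preceding the corollary. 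The key point is that such deformations genuinely exist: the isometry group of $S^3\times S^3$ in its $3$-symmetric structure is $9$-dimensional while the isometry group of the round $\SM^3(r)$ sitting inside it is at most $6$-dimensional, so there is an at least $3$-dimensional space of normal deformations of $L$ coming from ambient isometries, giving a nonzero eigenform. Hence $9 = k^2/r^2$ for some integer $k\ge 2$, i.e. $r = k/3$.

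For the second assertion I would simply invoke the preceding Proposition: the submanifolds $\Gamma_1$ and $\Gamma_2$ are round Lagrangian spheres isometric to $\SM^3(\tfrac23)$, realizing $k=2$, and the submanifolds $L(a,b)$ are round Lagrangian spheres isometric to $\SM^3(\tfrac43)$, realizing $k=4$. This completes the argument.

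The main obstacle, and the one point that deserves care rather than a one-line citation, is justifying that the $9$-dimensional ambient isometry group really induces an at least $3$-dimensional space of deformations that are \emph{transversal to the diffeomorphisms of $L$} — one must check that a generic ambient Killing field is not everywhere tangent to $L$, so that its restriction to the normal bundle is nonzero, and moreover that the associated variation $1$-form does not vanish identically (equivalently, the deformation is not purely a reparametrization of $L$). Since $\dim\Iso(S^3\times S^3)=9$ and $\dim\Iso(\SM^3(r))\le 6$, the space of ambient Killing fields whose restriction to $L$ is a genuine normal deformation modulo $\mathrm{Diff}(L)$ has dimension at least $9-6=3$, which is strictly positive; this forces the existence of at least one nontrivial such deformation and hence the eigenvalue equation $9=k^2/r^2$. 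Everything else — the constant-curvature hypothesis, the spectrum of $\SM^3(r)$, and the realization of $k=2,4$ — is either assumed or already proved in the excerpt.
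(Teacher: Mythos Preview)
Your proposal is correct and matches the paper's intended argument: the corollary is stated without explicit proof, but the reasoning is exactly the dimension count ($9-6\ge 3$) for nontrivial Lagrangian deformations, the eigenvalue-$9$ constraint from \cite{ss10,gip03}, and the co-closed $1$-form spectrum $\{k^2/r^2:k\ge 2\}$ on $\SM^3(r)$, all of which appear in the text immediately surrounding the corollary. The only quibble is that your opening reference to Lemma~\ref{lag} is irrelevant here (that lemma concerns the graph/symmetry characterization, not the metric), but this does not affect the argument.
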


\bigskip

\begin{ere}
The volume is constant on connected components of the space of Lagrangian
submanifolds. Hence, in the case of $S^3 \times S^3$ it has at least
$3$ connected components.
\end{ere}

\section{Geodesic vector fields}

Let us fix throughout this section the unit length spinor $\Phi:=[\tilde u,1]$. By \eqref{ks}, $\Phi$ is a Killing spinor with Killing constant $ \frac12$. Compared to $\Phi$, any spinor $\Psi$ on $\SM^3$ is determined by a vector field $V$ and a function $\alpha$. Indeed the map $\mathrm{T}_g\SM^3\times \RM\to\Sigma_g \SM^3$ defined by $(V,\alpha)\mapsto V\cdot\Phi+\alpha\Phi$ is bijective at every point $g$, so the spinor $\Psi$ can be uniquely written as
\beq\label{valpha}
\Psi = V\cdot\Phi+\alpha\Phi\ .
\eeq

The generalized Killing equation for $\Psi$ translates into a  system of equations for $V$ and $\alpha$:

\begin{epr}\label{system} The spinor $\Psi := V\cdot\Phi+\alpha\Phi$ is a generalized Killing spinor 
of unit length if and only if the following system holds:
$$
\begin{array}{ll}
& (\mathrm{i}) \quad \alpha^2 + |V|^2 = 1 \\
& (\mathrm{ii}) \quad -V \lrcorner \ast \nabla_VV - V(\alpha) V +\alpha \nabla_VV + d\alpha = 0 \\
& (\mathrm{iii}) \quad \alpha  \ast(V \wedge dV) + (2\alpha -\delta V)(1-\alpha^2) + \alpha V(\alpha) = 0\\
\end{array}
$$
\end{epr}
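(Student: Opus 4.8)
The plan is to express the generalized Killing condition $\nabla_X\Psi = A(X)\cdot\Psi$ for $\Psi = V\cdot\Phi + \alpha\Phi$ directly, using the fact that $\Phi$ is a Killing spinor with constant $\tfrac12$ (Equation \eqref{ks}), and then to read off three scalar/vectorial consequences by splitting the resulting identity into components. First I would recall from Section~2 (the discussion preceding Lemma~\ref{div}) that $\Psi$ is a generalized Killing spinor of unit length precisely when $\Psi$ has constant length and, for the endomorphism $A$ defined by $\nabla_X\Psi = A(X)\cdot\Psi$, $A$ is symmetric. Unit length $|\Psi|^2=1$ immediately gives condition (i): since $V\cdot\Phi \perp \Phi$ (Clifford multiplication by a vector is skew-symmetric) and $|V\cdot\Phi| = |V||\Phi|$, we get $|\Psi|^2 = |V|^2 + \alpha^2 = 1$.

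Next I would compute $\nabla_X\Psi$ for an arbitrary tangent vector $X$. Using the Leibniz rule, \eqref{ks}, and \eqref{ast} (so that $\ast X\cdot\Phi = X\cdot\Phi$), one obtains
$$
\nabla_X\Psi = (\nabla_X V)\cdot\Phi + V\cdot(\tfrac12 X\cdot\Phi) + X(\alpha)\,\Phi + \tfrac{\alpha}{2}X\cdot\Phi.
$$
The product $V\cdot X\cdot\Phi$ must be decomposed into its symmetric and skew parts in $X,V$: writing $V\cdot X = -\la V,X\ra + V\wedge X$ and using $V\wedge X \cdot \Phi = -\ast(V\wedge X)\cdot\Phi = -(\ast(V\wedge X))\cdot\Phi$ together with $\ast$-duality, one can re-express everything in the basis $\{\Phi, e_\ell\cdot\Phi\}$. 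Plugging the result into $A(X)\cdot\Phi + \la A(X),V\ra$-type terms and isolating the coefficient of $\Phi$ and the coefficient of each $e_\ell\cdot\Phi$ gives a formula for $A(X)$ as a $\mathrm{T}M$-valued expression linear in $X$, with coefficients built from $V$, $\nabla V$, $\alpha$, $d\alpha$. The symmetry of $A$ is then equivalent to the vanishing of $e_i\wedge A(e_i)$ (the skew part), exactly as in the proof of Lemma~\ref{div}.

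Finally I would split the equation $e_i \wedge A(e_i) = 0$ (a $2$-form, hence dual to a vector) into its component along $V$ and its component in $V^\perp$; together with the trace condition these yield (ii) and (iii). Concretely, I expect the $V^\perp$-part (equivalently, contracting $\ast(e_i\wedge A(e_i))$ against a vector orthogonal to $V$) to produce the vector equation (ii) involving $\nabla_V V$, $V\lrcorner\ast\nabla_V V$, $V(\alpha)V$ and $d\alpha$, while pairing with $V$ (or taking a suitable trace, which brings in $\delta V$ and $\ast(V\wedge dV)$) produces the scalar equation (iii). Conversely, these three conditions clearly force $e_i\wedge A(e_i)=0$ and $|\Psi|=1$, giving the reverse implication for free.

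The main obstacle is purely computational: keeping track of the Clifford-algebra identities in dimension $3$ — in particular the sign conventions in \eqref{ast}, the identity $X\cdot Y\cdot Z\cdot\Psi = -\Psi$ for a positive orthonormal frame, and the Hodge-star duality between $1$-forms and $2$-forms — and then correctly disentangling the symmetric and skew parts of the bilinear expression $V\cdot X\cdot\Phi$ so that the coefficient of $\Phi$ versus the coefficients of $e_\ell\cdot\Phi$ are cleanly separated. Once the formula for $A(X)$ is in hand, extracting (ii) and (iii) from $e_i\wedge A(e_i)=0$ is a routine decomposition along $V$ and $V^\perp$, using that $\nabla_V V \perp V$ (since $|V|^2 = 1-\alpha^2$ forces $\la\nabla_V V,V\ra = -\alpha V(\alpha)$, which is precisely how the $\alpha V(\alpha)$ term enters (iii)).
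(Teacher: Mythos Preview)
Your proposal is correct and follows essentially the same route as the paper: unit length gives (i), one computes $A(X)$ from $\nabla_X\Psi$ via the Killing equation for $\Phi$ and the Clifford identities, and the symmetry condition $e_i\wedge A(e_i)=0$ is then split along $V$ into (ii) and (iii). The paper streamlines two of your steps slightly---it reads off $A(X)$ from the scalar identity $\la A(X),Y\ra=\la\nabla_X\Psi,\,Y\cdot\Psi\ra$ rather than by solving in the basis $\{\Phi,e_\ell\cdot\Phi\}$, and it obtains (ii) and (iii) literally as $V\lrcorner\,\omega=0$ and $V\wedge\omega=0$ for the $2$-form $\omega:=e_i\wedge A(e_i)$---but the overall strategy is identical.
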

\proof We assume that $\Psi$ has unit length, which is equivalent to (i). It remains to show that the generalized Killing condition is equivalent to (ii)-(iii). We compute the covariant derivative of $\Psi$ using Equation \eqref{ast}:
\bea\nabla_X\Psi&=&\nabla_XV\cdot\Phi+X(\alpha)\Phi+\tfrac12 V\cdot X\cdot\Phi+\tfrac12 \alpha X\cdot\Phi\\
&=&\left(\nabla_XV+\tfrac12\ast(V\wedge X)+\tfrac12\alpha X\right)\cdot\Phi+\left(X(\alpha)-\tfrac12\la V,X\ra\right)\Phi.\eea
On the other hand, for every vector field $Y$ we have
$$ Y\cdot\Psi=Y\cdot V\cdot\Phi+\alpha Y\cdot\Phi=\left(\ast(Y\wedge V)+\alpha Y\right)\cdot\Phi-\la Y,V\ra\Phi.$$
Consequently, the tensor $A$ defined in Equation \eqref{gks} satisfies
\bea\la A(X),Y\ra&=&\la \nabla_X \Psi, Y\cdot\Psi\ra\\
&=&\la \nabla_XV+\tfrac12\ast(V\wedge X)+\tfrac12\alpha X, \ast(Y\wedge V)+\alpha Y\ra-\la Y,V\ra\left(X(\alpha)-\tfrac12\la V,X\ra\right)
\eea
Using the standard properties of the Hodge adjoint $\ast$ we readily obtain
\bea A(X)&=&-V\lrcorner \ast \nabla_XV +\alpha \nabla_XV-\tfrac12 |V|^2 X+\tfrac12 \la X,V\ra V+\tfrac12\alpha\ast(V\wedge X)\\
&&-\tfrac12 \alpha V\lrcorner \ast X+\tfrac12\alpha^2 X-VX(\alpha)+\tfrac12 \la X,V\ra V\\
&=&-V\lrcorner \ast \nabla_XV+\alpha \nabla_XV -\tfrac12 |V|^2 X+ \la X,V\ra V- \alpha V\lrcorner \ast X+\tfrac12\alpha^2 X-VX(\alpha).
\eea
It was already noticed that $A$ is symmetric if and only if $e_i\wedge A(e_i)=0$ for some local orthonormal basis $e_i$. From the previous formula we get:\bea e_i\wedge A(e_i)&=& -e_i\wedge (V\lrcorner \ast \nabla_{e_i}V)+\alpha dV-\alpha e_i\wedge (V\lrcorner \ast e_i)+V\wedge d\alpha\\
&=&V\lrcorner(e_i\wedge \ast \nabla_{e_i}V)-\ast\nabla_VV+\alpha dV+ 2\alpha\ast V+V\wedge d\alpha\\
&=&-\delta V\ast V -\ast\nabla_VV+\alpha dV+2\alpha\ast V+V\wedge d\alpha=:\omega.
\eea
The 2-form $\omega$ vanishes identically if and only if its wedge and interior product with $V$ vanish. This is clear on the support of $V$, and $\omega$ is zero anyway outside the support of $V$. We now compute:
\bea V\wedge\omega&=&(2\alpha-\delta V)\ast|V|^2-V\wedge\ast\nabla_VV+\alpha V\wedge dV\\
&=&(2\alpha-\delta V)\ast|V|^2-\ast\la V,\nabla_VV\ra+\alpha V\wedge dV\\
&=&\ast\left((2\alpha-\delta V)(1-\alpha^2)+\alpha V(\alpha)\right)+\alpha V\wedge dV,
\eea
and
\bea V\lrcorner \omega&=&-V\lrcorner \ast\nabla_VV+\alpha V\lrcorner dV+|V|^2d\alpha-V(\alpha)V\\
&=&-V\lrcorner \ast\nabla_VV+\alpha \nabla_VV-\tfrac12\alpha d(|V|^2)+(1-\alpha^2)d\alpha-V(\alpha)V\\
&=&-V\lrcorner \ast\nabla_VV+\alpha \nabla_VV+\alpha^2 d\alpha+(1-\alpha^2)d\alpha-V(\alpha)V\\
&=&-V\lrcorner \ast\nabla_VV+\alpha \nabla_VV+d\alpha-V(\alpha)V.
\eea
This proves that the symmetry of $A$ is equivalent to (ii)-(iii).

\qed

\bigskip

The main result in this section is the following 

\begin{ath}\label{theo}
If the function $\alpha$ associated via \eqref{valpha} to a generalized Killing spinor $\Psi$ on $\SM^3$ is
constant, then $\Psi$ is one of the spinors described in cases $(1)$ and $(3)$ in Example \ref{egks} above.
\end{ath}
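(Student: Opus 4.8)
The plan is to substitute the hypothesis $\alpha\equiv\mathrm{const}$ into the system (i)--(iii) of Proposition~\ref{system} and to argue according to the value of $\alpha$. If $\alpha^2=1$, then (i) forces $V=0$, so $\Psi=\alpha\Phi$ is a constant multiple of $\Phi$, i.e.\ a Killing spinor with constant $\tfrac12$; this is case~(1) of Example~\ref{egks}. Assume henceforth $\alpha^2<1$; then by (i) the vector field $V$ has constant length $|V|=c:=\sqrt{1-\alpha^2}>0$, hence is nowhere vanishing, and we set $\hat V:=V/c$.

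The first step is to show that $\hat V$ is a geodesic vector field. Since $|V|$ is constant, $\nabla_VV$ is pointwise orthogonal to $V$; since $d\alpha=0$ and $V(\alpha)=0$, equation (ii) reduces to $\alpha\,\nabla_VV=V\lrcorner{\ast}\nabla_VV$. Now $\langle V\lrcorner{\ast}\nabla_VV,\,\nabla_VV\rangle=\vol(\nabla_VV,V,\nabla_VV)=0$, so taking the inner product with $\nabla_VV$ gives $\alpha|\nabla_VV|^2=0$; if $\alpha\neq0$ this already yields $\nabla_VV=0$, while if $\alpha=0$ then $V\lrcorner{\ast}\nabla_VV=0$, which together with $\nabla_VV\perp V$ and $|V|\neq0$ again forces $\nabla_VV=0$. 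Hence $\nabla_VV\equiv0$ and $\nabla_{\hat V}\hat V=0$. Since the metric on $\SM^3$ is the round one, the integral curves of the unit field $\hat V$ are unit-speed geodesics, hence closed great circles.

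Next I would read off the divergence of $\hat V$ from (iii). A geodesic unit vector field on a $3$-manifold satisfies $\hat V\lrcorner d\hat V=\nabla_{\hat V}\hat V=0$, so $d\hat V=\mu\,{\ast}\hat V$ for some smooth function $\mu$ on $\SM^3$. Substituting $V=c\hat V$ and $dV=c\mu\,{\ast}\hat V$ into (iii), using $\hat V\wedge{\ast}\hat V=\vol$ and $V(\alpha)=0$, and dividing by $c^2=1-\alpha^2$ yields the scalar identity $\delta\hat V=\tfrac{\alpha}{c}(\mu+2)$. When $\alpha=0$ this already gives $\delta\hat V=0$, so $\hat V$ is a geodesic divergence-free unit vector field, and by the theorem of Gluck and Gu~\cite{gg01} it is a Hopf field. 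If $\hat V$ is left-invariant, $\hat V_g=gb$, then $\Psi=V\cdot\Phi=[\tilde u,cb]$ is given by a constant function and hence is in case~(1); if $\hat V$ is right-invariant, $\hat V_g=bg$, then $\Psi=[\tilde u,g^{-1}bg]$ is $\hat V\cdot\Phi$ with $\Phi=[\tilde u,1]$, i.e.\ case~(3) of Example~\ref{egks} (the explicit form being case~(3) of Lemma~\ref{ex}). This disposes of $\alpha=0$.

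The heart of the matter is the range $0<\alpha^2<1$, which I expect to be the main obstacle. Differentiating $d\hat V=\mu\,{\ast}\hat V$ and using $d^2\hat V=0$ together with $d{\ast}\hat V=-(\delta\hat V)\vol$ gives $\hat V(\mu)=\mu\,\delta\hat V=\tfrac{\alpha}{c}\mu(\mu+2)$; restricted to an integral curve of $\hat V$ this is a Riccati equation $\dot\mu=k\,\mu(\mu+2)$ with $k=\alpha/c\neq0$. Since that curve is a closed geodesic, $\mu$ along it is a periodic solution, and the only periodic solutions of this equation are the constants $\mu\equiv0$ and $\mu\equiv-2$; as $\SM^3$ is connected and covered by such curves, $\mu$ is globally constant, equal to $0$ or to $-2$. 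The value $\mu\equiv0$ is impossible, since it would make $\delta\hat V=2\alpha/c$ a nonzero constant, contradicting $\int_{\SM^3}\delta\hat V\,\vol=0$. Hence $\mu\equiv-2$ and $\delta\hat V=0$, so $\hat V$ is geodesic and divergence-free, thus a Hopf field by~\cite{gg01}; since left-invariant unit fields satisfy $d\hat V=-2\,{\ast}\hat V$ whereas right-invariant ones satisfy $d\hat V=2\,{\ast}\hat V$, the relation $d\hat V=-2\,{\ast}\hat V$ forces $\hat V$ to be left-invariant, $\hat V_g=gb$ with $|b|=1$. Then $\Psi=\alpha\Phi+c\hat V\cdot\Phi=[\tilde u,\alpha+cb]$ is given by the unit constant quaternion $\alpha+cb$, hence is a Killing spinor with constant $\tfrac12$, i.e.\ case~(1), completing the proof. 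The delicate point is precisely this last case: the divergence-free property of $\hat V$ is invisible in (iii) alone and must be forced out of it together with the integrability relation $d^2\hat V=0$, where the closedness of the geodesic orbits enters through the Riccati ODE; everything else is bookkeeping with the quaternionic description of spinors and the list of Lemma~\ref{ex}.
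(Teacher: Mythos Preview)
Your argument is correct and follows the same overall architecture as the paper's proof: split on the value of $\alpha$, use (ii) to get $\nabla_VV=0$, use (iii) together with an ODE along the closed geodesic orbits to force $\delta\hat V=0$, and conclude with Gluck--Gu. The difference lies in how you obtain the ODE in the range $0<\alpha^2<1$. The paper computes $\nabla_\xi d\xi=(\delta\xi)\,d\xi$ by a second-order curvature calculation using the constant sectional curvature of $\SM^3$, and then differentiates the scalar relation \eqref{2} to arrive at an ODE for $\delta\xi$. You instead parametrize $d\hat V=\mu\,{\ast}\hat V$ and obtain the evolution $\hat V(\mu)=\mu\,\delta\hat V$ directly from $d^2\hat V=0$, which is pure exterior calculus and requires no curvature input beyond the fact (already used) that the orbits are closed. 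Combined with the algebraic relation $\delta\hat V=\tfrac{\alpha}{c}(\mu+2)$ from (iii), this yields the Riccati equation $\dot\mu=k\mu(\mu+2)$, whose only periodic solutions are the constants $0$ and $-2$; you then rule out $\mu\equiv0$ via $\int_{\SM^3}\delta\hat V=0$. This last step is a genuine addition: the paper's route, once carried out correctly, also produces an ODE with two constant solutions (namely $\delta\xi\in\{0,\,2\alpha/c\}$), and the nonzero one must likewise be excluded by the integral constraint. So your approach is slightly more elementary (no curvature tensor) and makes the elimination of the spurious constant solution explicit, while the paper's curvature computation has the advantage of giving the covariant evolution of $d\xi$ itself, not just of the scalar $\mu$.
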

\proof

Assume first that the function $\alpha$ is identically zero. Then Lemma~\ref{system} implies
that $V$ is a unit length divergence-free vector field satisfying $V \lrcorner \ast \nabla_VV = 0$. As $\la V,\nabla_VV\ra=0$, 
it follows that $\nabla_VV=0$. Using a result of Gluck and Gu (\cite{gg01}, Theorem A) we conclude
that $V$ has to be a Hopf vector field, i.e. a left or right-invariant unit vector field on $\SM^3$. If $V$ is left-invariant then the function representing $\Psi=V\cdot\Phi$ in the frame $\tilde u$ is constant, so $\Psi$ is a Killing spinor with Killing constant $\frac12$. If $V$ is right-invariant, then we are in case (3) of Example \ref{egks}.

If $\alpha$ is identically 1, then $V=0$ so $\Psi=\Phi$ and we are in case $(1)$ of Example \ref{egks}.

Finally, in the case where $\alpha$ is a constant different from 0 and 1,
Lemma~\ref{system} (ii) reads $V \lrcorner \ast \nabla_VV=\alpha \nabla_VV$ and since these two vectors are orthogonal, they both vanish, showing that $V$ is a geodesic vector field.  Using Lemma~\ref{system} (iii) we obtain that the normalized 
vector field $\xi := \frac{V}{|V|}$ satisfies the equation
\beq\label{2}
\ast(\xi \wedge d\xi)+ 2 - \delta \xi \tfrac{\sqrt{1-\alpha^2}}{\alpha} = 0 \ .
\eeq
Let us write $\nabla\xi=\phi+\psi$ with $\phi$ symmetric and $\psi$ skew-symmetric. As $\nabla_\xi\xi=0$, both $\phi$ and $\psi$ vanish on $\xi$. In dimension 2 every trace-free symmetric endomorphism anti-commutes with every skew-symmetric endomorphism, consequently the trace-free part $\phi_0$ of $\phi$ anti-commutes with $\psi$. Writing $\phi=\phi_0+\frac12(\tr\phi)\id$ we infer $(\phi+\psi)^2=\phi^2+\psi^2+(\tr\phi)\psi$, so the skew-symmetric part of $(\phi+\psi)^2$ equals $(\tr\phi)\psi=-(\delta\xi)\psi$. This can be written as follows:
\beq\label{dxi} e_i\wedge\nabla_{\nabla_{e_i}\xi}\xi=e_i\wedge (\phi+\psi)^2(e_i)=-2(\delta\xi)\psi=-(\delta\xi)d\xi,
\eeq
where $e_i$ is any local orthonormal frame.
Using \eqref{dxi} and the fact that the sectional curvature of $\SM^3$ is 1, we compute in a local orthonormal frame $e_i$ parallel at some point:
\bea \nabla_\xi d\xi&=&\nabla_\xi\left(e_i\wedge\nabla_{e_i}\xi\right)=e_i\wedge\left(R_{\xi,e_i}\xi+\nabla_{e_i}\nabla_\xi\xi+\nabla_{[\xi,e_i]}\xi\right)\\
&=&e_i\wedge\left(e_i-\la e_i,\xi\ra\xi-\nabla_{\nabla_{e_i}\xi}\xi\right)=-e_i\wedge\nabla_{\nabla_{e_i}\xi}\xi=(\delta\xi)d\xi,
\eea
thus $\nabla_\xi (\xi \wedge d\xi)= (\xi \wedge d\xi) \delta \xi$
and from \eqref{2} we get $\nabla_\xi \delta \xi = (\delta \xi)^2$. Every orbit of the flow of $\xi$ is a great circle, so is closed. The restriction of $\delta\xi$ to such an orbit is thus a periodic solution of the equation $y'=y^2$. The only periodic solution of this equation being the zero function, we obtain that $\delta\xi$ vanishes on $\SM^3$. By Theorem A in \cite{gg01} again, $\xi$ is a Hopf vector field on $\SM^3$. Moreover, \eqref{2} gives $\ast(\xi \wedge d\xi)=-2$. It is easy to check from \eqref{xy} that $d\xi=-2\ast\xi$ when $\xi$ is left-invariant and $d\xi=2\ast\xi$ when $\xi$ is right-invariant. Consequently $\xi$ is a left-invariant vector field and finally $\Psi=\alpha \Phi+\sqrt{1-\alpha^2}\, \xi\cdot\Phi$ is a Killing spinor with Killing constant $\frac12$. 

\qed

Coming back to our description of generalized Killing spinors on $\SM^3$ in terms of
Lagrangian embeddings, we obtain the following:

\begin{ecor}
Let $f: \SM^3 \rightarrow \SM^2 \subset \SM^3$ be a map whose graph $\Gamma_f$ is a Lagrangian submanifold of the nearly K\"ahler manifold $S^3\times S^3$.
Then the map $f$ is either constant, or satisfies $f(g) = g^{-1}bg$ for some fixed $b \in \SM^2 \subset \SM^3$.
\end{ecor}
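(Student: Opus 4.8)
The plan is to reduce the corollary directly to Theorem~\ref{theo} via the dictionary established in Sections~2--4. First I would observe that if $\Gamma_f$ is Lagrangian with respect to $\Omega$, then by Lemma~\ref{lag} (applied to the map $h := f^{-1}$, so that $\Gamma_f = \Gamma_{h^{-1}}$) the endomorphism $M_g$ of $\mathrm{Im}\,\HM$ given by $x\mapsto h_\ast(gx)h(g)^{-1}$ is symmetric for every $g\in\SM^3$. By Lemma~\ref{symmetric} this is equivalent to the associated unit vector fields $\xi_a$ being divergence-free, and by Lemma~\ref{div} this means the spinor $\Psi := [\tilde u, h]$ is a generalized Killing spinor on $\SM^3$. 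So every Lagrangian graph $\Gamma_f$ produces a canonical generalized Killing spinor.

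The next step is to extract the function $\alpha$ attached to $\Psi$ via \eqref{valpha} and show it is constant precisely because the image of $f$ lies on $\SM^2$, i.e. because $h(g) = f(g)^{-1}$ is an imaginary unit quaternion for every $g$. Writing $\Psi = [\tilde u, h]$ and $\Phi = [\tilde u, 1]$, the decomposition \eqref{valpha} reads $\Psi = V\cdot\Phi + \alpha\Phi$ with $\alpha(g) = \la \Psi, \Phi\ra = \mathrm{Re}(h(g))$, the real part of the quaternion $h(g)$ (using that $\la [\tilde u, p],[\tilde u, q]\ra = \mathrm{Re}(p\bar q)$ under the identification $\Sigma_g\SM^3 \simeq \HM$). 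Since $f(g)\in\SM^2\subset\mathrm{Im}\,\HM$ forces $h(g) = f(g)^{-1} = -f(g)$ to be purely imaginary, we get $\alpha \equiv 0$. Hence $\Psi$ is a generalized Killing spinor with vanishing $\alpha$, and Theorem~\ref{theo} applies: $\Psi$ is one of the spinors in case $(1)$ or case $(3)$ of Example~\ref{egks}.

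Finally I would translate the two conclusions back into statements about $f$. By Lemma~\ref{ex}, case $(1)$ corresponds (up to left multiplication of $h$ by a constant quaternion $c$) to $h(g) = c$ constant, so $f(g) = h(g)^{-1} = c^{-1}$ is constant; compatibility with $f(g)\in\SM^2$ just means $c\in\SM^2$. Case $(3)$ corresponds to $h(g) = cg^{-1}bg$ for fixed $b\in\SM^2$ and a constant quaternion $c$; then $f(g) = h(g)^{-1} = g^{-1}b^{-1}g\,c^{-1}$, and the requirement that this lie in $\SM^2$ for all $g$ — in particular at $g=e$ it gives $b^{-1}c^{-1}\in\SM^2$, hence $c = \pm b^{-1}$ after a short computation — forces $f(g) = \pm g^{-1} b^{-1} g = g^{-1} b' g$ for some $b'\in\SM^2$. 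This yields exactly the stated alternative.

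The main obstacle I anticipate is bookkeeping the left-multiplication ambiguity in Lemma~\ref{ex} (the "up to left multiplication of $f$ with some constant quaternion") and checking that, after restoring that constant, the constraint $\mathrm{Im}(f) \subset \SM^2$ is consistent and pins down the constant to lie in $\SM^2$ as well — i.e. that no spurious non-imaginary constant survives. This is a genuinely short quaternionic computation rather than a conceptual difficulty, but it is where care is needed; everything else is a direct chain of the already-established equivalences Lemma~\ref{div} $\Leftrightarrow$ Lemma~\ref{symmetric} $\Leftrightarrow$ Lemma~\ref{lag} together with Theorem~\ref{theo}.
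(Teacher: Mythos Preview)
Your approach is essentially the paper's: build the generalized Killing spinor $\Psi=[\tilde u,f^{-1}]$, observe $\alpha\equiv 0$ because $f$ takes values in $\SM^2$, and invoke Theorem~\ref{theo}. The paper's version is more direct only at the last step: rather than passing through the parametrized families of Lemma~\ref{ex} and then pinning down the extra constant, it reads off from the $\alpha=0$ case of the proof of Theorem~\ref{theo} that the vector field $V$ with $\Psi=V\cdot\Phi$ is Hopf, and computes $f$ straight from $V_g=[u,f(g)^{-1}]$: left-invariant $V$ gives constant $f$, right-invariant $V_g=ag$ gives $gf(g)^{-1}=ag$ and hence $f(g)=g^{-1}a^{-1}g$. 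This sidesteps the constant-quaternion ambiguity you correctly flagged as the delicate point.

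Your bookkeeping in case~(3) has a slip. With $f(g)=g^{-1}b^{-1}g\,c^{-1}$ and $g^{-1}b^{-1}g$ ranging over all of $\SM^2$ as $g$ varies, the condition $f(g)\in\SM^2$ for every $g$ says that right multiplication by $c^{-1}$ maps $\SM^2$ into $\SM^2$; writing $c^{-1}=r+s$ with $s\in\mathrm{Im}\,\HM$ one gets $\mathrm{Re}(vc^{-1})=-\la v,s\ra$ for $v\in\mathrm{Im}\,\HM$, so this forces $s=0$, i.e.\ $c=\pm 1$, not $c=\pm b^{-1}$. (Your value $c=\pm b^{-1}$ would in fact give $f(e)=b^{-1}(\pm b)=\pm 1\notin\SM^2$.) With $c=\pm 1$ you obtain $f(g)=\pm g^{-1}b^{-1}g=g^{-1}b'g$ with $b'=\mp b\in\SM^2$, and the argument closes.
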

\proof The map $g\mapsto f(g)^{-1}$ takes values in $\SM^2\subset \RM^3$. Consider the unit vector field on $\SM^3$ defined by $V:=[u,f^{-1}]$ and the Killing spinor $\Phi:=[\tilde u,1]$.
Theorem \ref{theo} applied to the generalized Killing spinor $\Psi:=[\tilde u,f^{-1}]=V\cdot \Phi$ shows that $V$ is a Hopf vector field. If $V$ is left-invariant then $f$ is constant. If $V$ is right-invariant, $V_g=ag$ for some fixed $a\in \SM^2= \SM^3\cap\RM^3$, which yields $gf(g)^{-1}=ag$ and finally $f(g)=g^{-1}a^{-1}g$.

\qed

\labelsep .5cm

\end{document}